\documentclass{amsart}

\usepackage{geometry,graphicx,amssymb,amsmath,amsbsy,eucal,amsfonts,mathrsfs,amscd,bm,color}
\usepackage[all]{xy}

\geometry{
    letterpaper,
    left=   1.25in,
    right=  1.25in,
    top=    1.5in,
    bottom= 1.5in
}
\linespread{1.}
\numberwithin{equation}{section}

\allowdisplaybreaks[4]

\newtheorem{theorem}{Theorem}[section]
\newtheorem{lemma}[theorem]{Lemma}
\newtheorem{corollary}[theorem]{Corollary}
\newtheorem{proposition}[theorem]{Proposition}
\theoremstyle{definition}
\newtheorem{definition}[theorem]{Definition}
\theoremstyle{remark}

\theoremstyle{definition}
\newtheorem{example}[theorem]{Example}

\newcommand{\bc}{{ c}} 
\newcommand{\bZ}{{\mathbb{Z}}}

\subjclass[2010]{Primary }
\bibliographystyle{amsplain}

\title[]{Chip-firing and energy minimization on M-matrices}
\author[J.~Guzm\'an and C. Klivans]{{Johnny Guzm\'an}
 \email{johnny\_guzman@brown.edu}
 \address{Division of Applied Mathematics, Brown University, Providence, RI 02906}
\and{Caroline Klivans}
\email{klivans@brown.edu}
\address{Division of Applied Mathematics and Department of Computer Science, Brown University, Providence, RI 02906}
}
\date{\small \today}

 \thanks{}
 
 \keywords{chip-firing, M-matrices, energy minimization, combinatorial Laplacians}
 
 \subjclass{}
 
%%%%%%%%%%%%%%%%%%%%%%%%%%%%%%%%%%%%%
%%%%%%%%%%%%%%%%%%%%%%%%%%%%%%%%%%%%%
\begin{document}
\maketitle

\begin{abstract}

We consider chip-firing dynamics defined by arbitrary M-matrices.
M-matrices generalize graph Laplacians and were shown by Gabrielov to
yield avalanche finite systems. Building on the work of Baker and
Shokrieh, we extend the concept of energy minimizing chip
configurations.  Given an M-matrix, we show that there exists a unique
energy minimizing configuration in each equivalence class defined by
the matrix.

We consider the class of $z$-superstable configurations.
% which satisfy
%a strictly stronger stability requirement than superstable
%configurations (equivalently $G$-parking functions or reduced divisors).
We prove that for any M-matrix, the $z$-superstable configurations
coincide with the energy minimizing configurations.  Moreover, we
prove that the $z$-superstable configurations are in simple duality
with critical configurations.  Thus for all avalanche-finite systems
(including all directed graphs with a global sink) there exist unique
critical, energy minimizing and $z$-superstable configurations.   The critical configurations are
in simple duality with energy minimizers which coincide with $z$-superstable configurations.

%% We consider chip-firing dynamics defined by arbitrary M-matrices.
%% M-matrices generalize graph Laplacians and were shown by Gabrielov to
%% yield avalanche finite systems. Building on the work of Baker and
%% Shokrieh, we extend the concept of energy minimizing chip
%% configurations.  Given an M-matrix, we show that there exists a unique
%% energy minimizing configuration in each equivalence class defined by
%% the matrix.

%% We define the class of $z$-superstable configurations which satisfy
%% a strictly stronger stability requirement than superstable
%% configurations (equivalently $G$-parking functions or reduced divisors).  We
%% prove that for any M-matrix, the $z$-superstable configurations
%% coincide with the energy minimizing configurations.  Moreover, we
%% prove that the $z$-superstable configurations are in simple duality
%% with critical configurations.  Thus for all avalanche-finite systems
%% (including all directed graphs with a global sink) there exist unique
%% critical, energy minimizing and $z$-superstable configurations.   The critical configurations are
%% in simple duality with energy minimizers which coincide with $z$-superstable configurations.

%\vspace{.5in}
%\noindent Corresponding author: Caroline Klivans\\
%Telephone: 401-863-1460 / Fax: 401-863-1355\\
%Email: klivans@brown.edu\\

\end{abstract}

%%%%%%%%%%%%%%%%%%%%%%%%%%%%%%%%%
%%%%%%%%%%%%%%%%%%%%%%%%%%%%%%%%%
%%%%%%%%%%%%%%%%%%%%%%%%%%%%%%%%%
\section{Introduction}

There is a large literature on the dynamics and combinatorics of
chip-firing games.  They were originally studied in the context of
self-organized criticality and sandpile models \cite{BTW, Dhar, gabe}, as balancing games on graphs \cite{BLS, Spencer}, and for their algebraic structure \cite{Biggs, CRS}.
More recently, chip-firing has appeared in a surprising variety of new
connections.  For example, chip firing plays a central role in a
Riemann-Roch theorem for graphs \cite{BN} and linear systems in
tropical geometry \cite{HMY}.  Our starting point will be the recent
work of Baker and Shokrieh on chip-firing, potential theory and energy
minimization on graphs \cite{BS}.  Building on this new connection to
energy minimization, we are able to return to some of the first questions
concerning the long-term stability of chip-firing dynamics.

There are many variants to the chip-firing game.  Typically one considers a
finite graph with integer values associated to the vertices.  A single
vertex is distinguished as the sink (or bank).  The value of the sink
may be arbitrary but all other vertices have non-negative values,
which we think of as the number of chips associated to the vertex.  A
chip firing rule is given as follows: if any non-sink vertex has at
least as many chips as it has neighbors, then it ``fires'' by sending
one chip to each of its neighbors. The value of each neighbor is
increased by one and the value at the vertex that fired is decreased
by its degree.  In particular, if we consider the number of chips at
each vertex as an integer vector, called a chip configuration, then
``firing'' a vertex subtracts the corresponding row of the graph
Laplacian from the configuration.  Two chip configurations are
considered equivalent if their difference is in the image of the graph
Laplacian. Informally, two chip configurations are equivalent if  one chip configuration can transform to the
other via fires and reverse-fires.

Of great interest is the long term behavior of such systems.
If the system has a sink, as above, then every configuration does
stabilize in the sense that eventually no non-sink vertex will be able
to fire.  Imposing further stability requirements leads to important
classes of chip configurations.  Briefly, {\emph{superstable}}
configurations (also known as $G$-parking functions or reduced divisors)
are stable configurations such that no subset of vertices can
simultaneously fire and result in a non-negative
configuration. \emph{Critical} configurations (also known as recurrent
configurations) are stable configurations that can be reached from
sufficiently large starting configurations.  It is well known that
superstable configurations and critical configurations exist and are
unique per equivalence class of chip configurations.  Furthermore,
critical and superstable configurations are in simple duality with each other.  They are also in bijection with the
spanning trees of the graph; see e.g. \cite{CP} or \cite{Tetali}.

Baker and Shokrieh \cite{BS} introduced a norm on chip configurations in terms of the graph Laplacian for undirected graphs.  The norm
is thought of as an energy function and they
 investigated energy-minimizing chip configurations.  In particular, they prove
that energy-minimizers are precisely the superstable configurations
and hence unique per equivalence class and in duality with critical
configurations.

Following the work of Dhar \cite{Dhar}, Gabrielov \cite{gabe} considered more general
chip-firing dynamics in terms of a class of dissipation matrices which is broader 
than the graph Laplacians above.  He worked with 
\emph{avalanche-finite matrices}, which precisely guarantee that all
configurations eventually stabilize using legal firing moves (see Section \ref{sec:M} for specifics).  Gabrielov
proved that critical configurations exist and are unique per
equivalence class for all avalanche-finite matrices.

More recently, much attention has focused on the intermediate case of
chip-firing on directed graphs.   In this case, the existence and
uniqueness of critical configurations is guaranteed by Gabrielov's
earlier work, because the associated graph Laplacians are a special
case of avalanche-finite matrices. In this setting, the term superstable is used in at least two
different ways.  We will use the notation $\chi$- and $z$- superstable
to distinguish the classes of configurations (see Section
\ref{stable}).  The uniqueness of $\chi$-superstable configurations and
the duality with critical configurations appears in \cite{lionel, PS}
for the special case of Eulerian directed graphs.  For all directed
graphs with a global sink, a stronger form of stability is required
for an analogous result.  The uniqueness of $z$-superstable
configurations and the duality with critical configurations appears
originally in \cite{Perlman} and later in \cite{Perkinson} and
\cite{Asadi}.

% had only been observed to hold for the
%special case of Eulerian directed graphs \cite{lionel, PS}.

%% But, the uniqueness of the
%% established notion of superstable configuration had only been observed to hold for the
%% special case of Eulerian directed graphs \cite{lionel, PS}.

We unify and generalize these results as follows.  First, building on Baker and
Shokrieh's work, we define a class of norms and energy-minimizing
configurations for all avalanche-finite matrices.  We prove the
existence and uniqueness of energy-minimizers per equivalence class of
these matrices (Theorem~\ref{uniquem}). 
%%  Second, we generalize the definition of a superstable
%% configuration to what we call $z$-superstable configurations.  In the
%% undirected graph case and the Eulerian directed graph case, they
%% coincide with the classic notion.  For a general avalanche-finite
%% matrix, however, they impose a strictly stronger stability condition.
We show that the $z$-superstable configurations are precisely the
energy-minimizers (Theorem~\ref{thm1}) and are in simple duality with critical
configurations (Theorem~\ref{thm:duality}).

%%  Second, we generalize the definition of a superstable
%% configuration to what we call $z$-superstable configurations.  In the
%% undirected graph case and the Eulerian directed graph case, they
%% coincide with the classic notion.  For a general avalanche-finite
%% matrix, however, they impose a strictly stronger stability condition.
%% We show that these $z$-superstable configurations are precisely the
%% energy-minimizers (Theorem~\ref{thm1}) and are in simple duality with critical
%% configurations (Theorem~\ref{thm:duality}).

 Namely, for all avalanche-finite matrices, there exist unique
 critical, energy-minimizing, and $z$-superstable configurations. The
 first are in simple duality with the latter two which coincide.  The
 number of such configurations is given by the determinant of the
 matrix.

%%%%%%%%%%%%%%%%%%%%%%%%%%%%%%%%
\section{M-matrices}\label{sec:M}

The dynamics of chip-firing on graphs is dictated by the reduced graph
Laplacian.  Let $G$ be a directed (multi)-graph with $n+1$ vertices. The graph Laplacian $\tilde{\Delta}(G)$ is given by
 \[ \tilde{\Delta}_{ij}=\begin{cases} -a_{ij} & \text{$i\neq j$ and $(i,j)\in E$} \\ \text{outdeg}(i) & \text{$i=j$} \\ 0 & \text{otherswise}, \end{cases}\] 
%% $$
%% \tilde{\Delta}_{ij} = \left\{
%%         \begin{array}{ll}
%%             -a_{ij} & \quad i \neq j \text{ and } (i,j) \in E  \\
%%             \text{outdeg}(i) & \quad  i=j \\
%%               0 & \quad \text{otherwise},
%%         \end{array}
%%     \right.
%% $$
    where $a_{ij}$ is the number of edges from $i$ to $j$.
A reduced graph Laplacian is any matrix resulting from deleting a single row and column from a graph Laplacian.  
All graphs we will consider will have a global sink.  A graph $G$ has a global sink, $s$, if for every vertex $v \neq s$ there is a directed path from $v$ to  $s$.  When referring to the reduced Laplacian $\Delta(G)$ for a graph with a global sink, we will always assume the row and column corresponding to the sink has been deleted.

  Given a graph on $n+1$ vertices with the last vertex a global sink, a chip
configuration ${ c} = (c_1, c_2, \ldots, c_n)$ is a non-negative
integer vector, $\bc \in \mathbb{Z}^n_{\geq 0}$.  The value $c_i$ is
thought of as the number of chips at vertex $i$.  Starting with a
configuration $\bc$, firing vertex $i$ results in subtracting the
$i$th row of the reduced Laplacian from ${ c}$; ${ c} - Le_i$ where $L=\Delta^T$ and $e_i$ is the $i$th standard basis vector in $\mathbb{R}^n$.  In this notation, the $n+1$st vertex is the sink vertex and we will not be concerned with its ``chip value''.  

Gabrielov considered more general chip-firing systems 
 by replacing the reduced graph Laplacian with a broader class of matrices \cite{gabe}.  
For an arbitrary $n \times n$ integer matrix N, we consider a system
with $n$ states.  A chip configuration is any integer vector ${ c}
\in \mathbb{Z}^n$.  Firing a state $i$ is defined to be the process
which replaces the configuration ${  c}$ with ${  c} - \textrm{N}^Te_i$,
namely subtracting the $i$th row of N.  Two configurations ${  c}$
and ${  d}$ are considered equivalent if 
their difference ${  c} - {  d}$ is in the
$\mathbb{Z}$-image of N.  In this more general setup, a state $i$ is
allowed to fire if $c_i \geq \textrm{N}_{ii}$.  Following the physicality of
the original model, Gabrielov restricted to matrices with a positive
diagonal and non-positive off-diagonal.  Therefore, a state must have
a certain positive amount of chips in order to fire and firing a state
increases the number of chips on neighboring states.  These are
referred to as redistribution matrices in ~\cite{gabe}.  A configuration
is stable if $c_i < \textrm{N}_{ii}$  for all states $i$. A natural question arises:
for which such matrices does the chip-firing process eventually stabilize versus
producing an infinite process.  
An \emph{avalanche-finite} matrix is one for which every non-negative chip
configuration  stabilizes.  We will look closely at such
matrices.
We start with some definitions.

\begin{definition}
An $n\times n$ matrix $L$ such that  $L_{ij} \le 0$ for all $i \ne j$ is called a Z-matrix.
%Let $L$ be a $n \times n$ matrix then if $L_{ij} \le 0$ for all $i \ne j$,  $L$ is called a Z-matrix.   
\end{definition}

\begin{definition}
Let $L$ be a $n \times n$ Z-matrix. If any of the following equivalent conditions hold then $L$ is called a non-singular M-matrix:

\begin{enumerate}
\item $L$ is avalanche finite.
\item The real part of the eigenvalues are positive. 
\item $L^{-1}$ exists and all the entries of $L^{-1}$ are non-negative.
\item There exists a vector $x \in \mathbb{R}^n$ with $ x \geq 0$ such that $L x$ has all positive entries.
\end{enumerate}
\end{definition}

The equivalence of the last three conditions can be found, for example, in Plemmons \cite{Plemmons}.  The equivalence of first condition is due to Gabrielov \cite{gabe}.  M-matrices appear in many different fields including economics, operations research, finite difference and finite element analysis ; see for example \cite{bramblehubbard, BurmanErn, CiarletRaviart, kaneko, Leontief, XuZikatanov}.  In particular, if the stiffness matrix (e.g. the discrete Laplacian) of the finite element method is an M-matrix then the solution satisfies a discrete maximum principle \cite{BurmanErn, CiarletRaviart,  XuZikatanov}. The discrete maximum principle was an important property used by Baker and Shokrieh \cite{BS} in their work on the graph Laplacian for undirected graphs.

We note that these conditions do not necessitate that
M has either positive row or column sums.  The desired properties of
chip-firing such as the existence of unique critical, superstable and
energy minimizing configurations will all hold in this more general
setting.

\section{Energy Minimization}

In this section, building on work of Baker and Shokrieh \cite{BS}, we will
prove that for any M-matrix, energy-minimizing configurations exist and are unique
per equivalence class.

Given an M-matrix $L$ and an integer vector $q$ define the following energy,
\begin{equation}\label{2energy}
E(q)=\|L^{-1} q\|_{2}^2,
\end{equation}
where $\|v\|_2^2= v \cdot v$.  

This energy is different from the
form used in \cite{BS}.
They defined energy minimizers in terms of a
norm using any pseudo-inverse of the (undirected) graph Laplacian. 
One of the reasons to choose the energy form here is that it
allows us to consider non-symmetric matrices $L$.  This will be particularly 
important in the directed graph case. 

The energy form can be extended to any energy from the class
$E(q)=\|L^{-1} q\|_G^2$ where $\|v\|_G^2= v^t G v$ and G is a
symmetric positive definite matrix with $L^{-1} G \ge 0$.  All of our
results below hold for this more general setting.  Moreover, if $L$ is
symmetric then setting $G = L$ recovers the energy used in \cite{BS}.
For ease of exposition, we use the simplest form where $G = I$. 

%\revj{
Furthermore, there are more energies one can consider. For instance, instead of basing the energy on the 2-norm, as in \eqref{2energy}, one can base the energy on a $p$-norm:
\begin{equation}\label{penergy}
E(q)=\|L^{-1} q\|_{p}^p,
\end{equation}
where $\|D\|_{p}^p=\sum_{i=1}^n |D_i|^p.$ The case $p=1$ was considered by Baker and Shokrieh \cite{BS} where they called this quantity the potential. In fact, all of our results will hold for energies defined in the following way
\begin{equation}\label{generalenergy}
E(q)=\sum_{i=1}^n \phi_i((L^{-1} q)_i),
\end{equation}
where the functions $\phi_i: \mathbb{R} \rightarrow \mathbb{R}$ are non-negative and strictly increasing. For example, $\phi_i(x)=|x|^p$ for all $1 \le i \le n$ as in the case of \eqref{penergy}. Another example, could be $\phi_i(x)=\log(1+|x|)$ for all $i$.  In the appendix we prove that the main results of the paper hold for these more general energies. However, for simplicity,  up until the appendix we will restrict the discussion to the energy \eqref{2energy}.
% } 

Baker and Shokrieh work in the undirected graph case and show that
energy-minimizing configurations  are precisely
the superstable configurations (which they refer to as reduced divisors) of the graph.  Hence energy-minimizers exist and are
unique per equivalence class.  In the current section we work directly with the energy
minimization problem for arbitrary M-matrices.  We make the
connection to superstable configurations in Section~\ref{sec:chip}.

The energy minimization problem is posed on equivalence classes
induced by $L$.  In order to have cleaner notation, we will adopt the
following convention throughout: Let $\Delta$ be an M-matrix and $L
= \Delta^{T}$.  In this way, we will not have to continually write the
transpose for row operations.  Also note that the transpose of an M-matrix is always an M-matrix.  

\begin{definition}
Two configurations $f,g \in \mathbb{Z}^n$ are equivalent, denoted $f\sim g$, if $g-f=L z$ for some $z \in \mathbb{Z}^n$. 
The equivalence class of $f$ is denoted by $[f]$.
%=\{ g \in \mathbb{Z}^n: g \sim f\}$.  
\end{definition}

Given  $f \in \mathbb{Z}^n$ with  $f \ge 0$ consider the following problem:
\begin{equation}\label{min}
\min_{g\sim f, g \ge 0}  E(g).
\end{equation}
A solution to the minimization problem is a non-negative configuration
equivalent to $f$ with smallest energy.  We call such a configuration
an energy-minimizer.  Since we are working in a discrete space it is
not difficult to see that minimizers always exist.  We will prove that for any M-matrix
there is a \emph{unique} energy-minimizer per equivalence class. To do this,
we first prove two preliminary lemmas.  We need the following
notation. Given $z \in \mathbb{Z}^n$ define $z^+ \in \mathbb{Z}^n_{\ge
  0}$ by

$$
z^+_i = \left\{
        \begin{array}{ll}
            z_i & \quad \text{if } z_i \ge 0  \\
              0 & \quad \text{otherwise.}
        \end{array}
    \right.
$$
 Similarly, define $z^- \in \mathbb{Z}^n_{\le 0}$ by replacing all positive entries of $z$ with $0$.

\begin{lemma}\label{lemma1}
Let $L$ be a $Z$-matrix. If $f,g \ge 0$ and $g=f-Lz$ then $h=f-L z^+ \ge 0$.  
%Let L satisfy $L_{i j}  \le 0$ for any $i \ne j$. If $f,g \ge 0$ and $g=f-Lz$ then $h=f-L z^+ \ge 0$.  

\end{lemma}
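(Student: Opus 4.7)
The plan is to do a componentwise check on $h_i$, splitting into cases according to the sign of $z_i$. The key decomposition is $z = z^+ + z^-$, which gives the two useful identities
\[
h = f - L z^+ \qquad \text{and} \qquad h = g + L z^-,
\]
the second one coming from $g = f - L z = f - L z^+ - L z^-$. I will pick whichever identity puts me in the more favorable sign situation for each index $i$.

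First I consider an index $i$ with $z_i \le 0$, so that $z^+_i = 0$. Using $h_i = f_i - (L z^+)_i$, I expand $(L z^+)_i = L_{ii} z^+_i + \sum_{j \ne i} L_{ij} z^+_j = \sum_{j \ne i} L_{ij} z^+_j$. Since $L$ is a $Z$-matrix we have $L_{ij} \le 0$ for $j \ne i$, and $z^+_j \ge 0$ by construction, so each summand is $\le 0$ and hence $(L z^+)_i \le 0$. Combined with $f_i \ge 0$, this gives $h_i \ge f_i \ge 0$.

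Next I consider an index $i$ with $z_i > 0$, so that $z^-_i = 0$. Using the other identity $h_i = g_i + (L z^-)_i$, the same diagonal-vanishes-off-diagonal-has-good-sign argument applies: $(L z^-)_i = \sum_{j \ne i} L_{ij} z^-_j$, and now $L_{ij} \le 0$ together with $z^-_j \le 0$ makes every summand nonnegative, so $(L z^-)_i \ge 0$. With $g_i \ge 0$ this yields $h_i \ge g_i \ge 0$.

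There is no real obstacle here; the only thing to be careful about is to switch which of the two expressions for $h$ one uses according to the sign of $z_i$, so that in each case the potentially bad diagonal term $L_{ii} z^\pm_i$ is automatically zero. Combining the two cases gives $h \ge 0$.
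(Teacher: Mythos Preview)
Your proof is correct and follows essentially the same approach as the paper: a componentwise case split on the sign of $z_i$, using the $Z$-matrix sign pattern on the off-diagonal to control $(Lz^\pm)_i$ once the diagonal term vanishes. The paper phrases the second case as $f_i - (Lz^+)_i \ge f_i - (Lz)_i = g_i \ge 0$, which is algebraically identical to your use of $h_i = g_i + (Lz^-)_i$.
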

\begin{proof}
Suppose that $z^+_i=0$. Then it is clear that $-(Lz^+)_i \ge 0$. Hence, $h_i \ge f_i \ge 0$. On the other hand suppose that $z^+_i >0$, then $z^{-}=z-z^{+}$ satisfies $z^{-}_i=0$ and so $(Lz^{-})_i \ge 0$, or equivalently $(Lz)_i \ge (Lz^{+})_i$ and so $f_i-(Lz^{+})_i \ge f_i-(Lz)_i \ge 0$.   
\end{proof}

\noindent The next Lemma expresses the difference in energy of two equivalent configurations.

\begin{lemma}\label{lemma2}
Let $L$ be an M-matrix and suppose that $g=f-Lz$, then
\begin{equation*}
E(g)=E(f)+z^t z-2z^t L^{-1}f=E(f)-z^tz-2z^tL^{-1}g. 
\end{equation*}
\end{lemma}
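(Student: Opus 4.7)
The plan is a direct algebraic computation using the definition $E(q)=\|L^{-1}q\|_2^2$ and the hypothesis $g=f-Lz$. No properties of $L$ beyond invertibility are needed; the M-matrix hypothesis is not actually used in the statement except implicitly to ensure $L^{-1}$ exists.

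First I would apply $L^{-1}$ to $g=f-Lz$ to get the key identity $L^{-1}g = L^{-1}f - z$. Substituting into the definition of $E$ yields
\begin{equation*}
E(g) = (L^{-1}f - z)\cdot(L^{-1}f - z) = \|L^{-1}f\|_2^2 - 2z\cdot L^{-1}f + z\cdot z,
\end{equation*}
which is exactly the first claimed identity $E(g)=E(f)+z^t z - 2 z^t L^{-1} f$, after rewriting the dot products as $z^t z$ and $z^t L^{-1} f$.

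For the second equality I would eliminate $L^{-1}f$ in favor of $L^{-1}g$ using the same identity in the form $L^{-1}f = L^{-1}g + z$. Then
\begin{equation*}
z^t L^{-1} f = z^t L^{-1} g + z^t z,
\end{equation*}
and substituting into the first expression gives
\begin{equation*}
E(g) = E(f) + z^t z - 2(z^t L^{-1} g + z^t z) = E(f) - z^t z - 2 z^t L^{-1} g,
\end{equation*}
as desired. There is no genuine obstacle here; the only thing to be careful about is the sign bookkeeping when converting between the $f$ and $g$ forms, and making sure the dot product is written as $z^t(\cdot)$ consistently to match the statement.
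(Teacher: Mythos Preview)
Your proof is correct and follows essentially the same direct computation as the paper: expand $E(g)=\|L^{-1}f - z\|_2^2$ to obtain the first identity, then substitute $L^{-1}f = L^{-1}g + z$ to pass to the second. The paper's argument is identical except that it compresses the derivation of the second equality into a single line.
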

\begin{proof}
\begin{alignat*}{1}
E(g)&=\|L^{-1}(f-L z)\|_2^2\\
&=\|L^{-1} f-z\|_2^2 \\
&=\|L^{-1} f\|_{2}^2+z^t z -2 z^t L^{-1} f\\
&=E(f)+z^t z -2 z^t L^{-1} f\\
&=E(f)-z^t z-2z^t L^{-1} g.
\qedhere
\end{alignat*}
\end{proof}

\noindent We now state our main theorem for this section.

\begin{theorem}\label{uniquem}
Let $L$ be  an M-matrix. For every configuration $f$, there exists a unique energy minimizer equivalent to $f$. Namely, for every configuration $f$, there exists a unique solution to problem \eqref{min}. 
\end{theorem}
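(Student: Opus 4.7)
Existence follows from a compactness argument: the feasible set $\{g\ge 0:g\sim f\}\subset\mathbb{Z}^n$ contains $f$, and along any minimizing sequence $g^{(k)}$ the boundedness of $\|L^{-1}g^{(k)}\|_2$ forces boundedness of $g^{(k)}$ (since $L$ is invertible), so only finitely many integer vectors compete and the infimum is attained.

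For uniqueness, I would assume two distinct minimizers $g_1,g_2$ with $g_2=g_1-Lz$ for some $z\in\mathbb{Z}^n$ and aim to show $z=0$. The first step is to apply Lemma~\ref{lemma1} with $f=g_1$, $g=g_2$ to produce the non-negative competitor $h:=g_1-Lz^+$, which lies in the common equivalence class. By minimality of $g_1$, the inequality $E(h)\ge E(g_1)$ combined with Lemma~\ref{lemma2} (used with $z$ replaced by $z^+$) gives $2(z^+)^t L^{-1}g_1\le \|z^+\|^2$. Meanwhile, $E(g_1)=E(g_2)$ together with Lemma~\ref{lemma2} yields $2z^t L^{-1}g_1=\|z\|^2=\|z^+\|^2+\|z^-\|^2$. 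Combining these should produce
\[
2(z^-)^t L^{-1}g_1 \;\ge\; \|z^-\|^2.
\]
Because $L^{-1}$ is entrywise non-negative (an M-matrix property) and $g_1\ge 0$, the left side is non-positive, forcing $z^-=0$, i.e.\ $z\ge 0$.

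With $z\ge 0$ in hand, I would finish by combining $L^{-1}g_2=L^{-1}g_1-z$ with the non-negativity of $L^{-1}g_2$ (again via $L^{-1}\ge 0$) to obtain the componentwise bound $L^{-1}g_1\ge z$. Since $z\ge 0$, this forces $z^t L^{-1}g_1\ge \|z\|^2$, which contradicts the earlier equality $z^t L^{-1}g_1=\tfrac12\|z\|^2$ unless $z=0$, whence $g_1=g_2$.

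The main obstacle I anticipate is that Lemma~\ref{lemma1} provides only a single natural non-negative competitor out of the sign-splitting of $z$ (a ``symmetric'' construction starting from $g_2$ in fact produces the same $h$), so one cannot simply interchange the roles of $g_1$ and $g_2$. The strategy circumvents this by invoking the M-matrix property $L^{-1}\ge 0$ twice: first to eliminate $z^-$, and then to convert $L^{-1}g_2\ge 0$ into a componentwise lower bound on $L^{-1}g_1$ that is incompatible with the energy-equality constraint.
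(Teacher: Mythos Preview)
Your argument is correct and uses the same ingredients as the paper's proof (Lemma~\ref{lemma1} to build the competitor $h$, Lemma~\ref{lemma2} for energy identities, and the entrywise non-negativity of $L^{-1}$), but the execution diverges in a minor way worth noting. The paper applies the \emph{second} form of Lemma~\ref{lemma2}, namely $E(h)=E(g_1)-\|z^+\|^2-2(z^+)^tL^{-1}h$; since $L^{-1}h\ge 0$ this gives $E(h)\le E(g_1)-\|z^+\|^2$ directly, and minimality of $g_1$ forces $z^+=0$ in one stroke---no need for the energy equality or the subtraction that you perform to isolate $z^-$. The paper then finishes with the first form of Lemma~\ref{lemma2} applied to the pair $(g_2,g_1)$, again reading off $z=0$ from a single sign argument. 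Your route---combining the inequality for $h$ with the exact equality $E(g_1)=E(g_2)$ to kill $z^-$, then using the componentwise bound $L^{-1}g_1\ge z$ to finish---is a bit longer but perfectly valid, and your observation that swapping the roles of $g_1,g_2$ in Lemma~\ref{lemma1} produces the \emph{same} $h$ is a genuine point that explains why the paper's choice of the second energy identity (pairing $z^+$ with $L^{-1}h$ rather than $L^{-1}g_1$) is the more natural one.
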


\begin{proof}
Suppose that $g \sim f$ and $w\sim f$ with $ g,w \ge 0$ both minimizers to problem \eqref{min}. We will show that $g=w$.
Because $g$ is equivalent to $w$,  there exists $z$ such that $g=w-Lz$ for some $z \in \bZ^n$. By Lemma \ref{lemma1} we know that $h=w-L z^+ \ge 0$ and of course $h \sim w \sim f$.  By Lemma \ref{lemma2} we have
\begin{alignat*}{1}
E(h)=&E(w)-(z^+)^t z^+-2(z^+)^tL^{-1}h.
\end{alignat*}

Using that $L^{-1}$ is a non-negative matrix and $h \ge 0$, $L^{-1}h\ge0$. This implies that $-2(z^+)^tL^{-1}h \le 0$, and so 
\begin{equation*}
E(h) \le E(w)-(z^+)^t z^+.
\end{equation*}
Since $w$ is a minimizer it must be that $z^+ =0$ or that $z \le 0$. 

On the other hand, we similarly have 
\begin{equation*}
E(w)=E(g)+z^tz-2 z^tL^{-1}w.
\end{equation*}
Since $z \le 0$ this shows that $E(g)< E(w)$ unless $z = 0$. 
\end{proof}

\section{Chip-firing on M-matrices}\label{sec:chip}

In Section~\ref{sec:M} we defined chip-firing on M-matrices.  For an $n \times
n$ M-matrix $\Delta$, we consider a system with $n$ states.  A configuration is
any integer vector ${  c} \in \bZ^n$, with $c_i$ considered the
number of chips at state $i$.   For a configuration ${ 
  c}$, state $i$ is allowed to fire if $c_i \geq \Delta_{ii}$ (recall that
M-matrices have non-negative diagonal entries).  The resulting
configuration is ${  c'} = {  c} - L e_i$ where $L=\Delta^T$.

In Section~\ref{stable} we exam three important types of chip
configurations - stable, $\chi$-superstable, and $z$-superstable.  In
Section~\ref{z-energy}, we prove that energy-minimizers coincide with
$z$-superstable configurations.  In Section~\ref{z-critical}, we prove
that $z$-superstable configurations are in duality with critical
configurations.

\subsection{Stability}\label{stable}

We consider three notions of `stable' configurations, each strictly stronger than the previous.
The definitions could be made with respect to any matrix, again we have in mind that $L$ is the transpose of an M-matrix.

\begin{definition}
A vector $f \in \bZ^n$  is stable if for all $i$, $f_i < L_{ii}$.
\end{definition}

\begin{definition}
A vector $f \in \bZ^n$ with $f \ge 0$ is $\chi$-superstable if for every $\chi \in \{0,1\}^n$ with  $\chi \neq 0$ there exists $1 \le i \le n$ such that  
\begin{equation*}
f_i-(L\chi)_i <0. 
\end{equation*}
\end{definition} 

\begin{definition}
A vector $f \in \bZ^n$ with $f \ge 0$ is  $z$-superstable if for every $z \in \bZ^n$ with $z \ge 0$ and $z \neq 0$  there exists  $1 \le i \le n$ such that  
\begin{equation*}
f_i-(Lz)_i <0. 
\end{equation*}
\end{definition}

The above notion of stable configuration is standard in the literature.
  A stable configuration is one in which no individual
state can fire. 
 A $\chi$-superstable configuration is one in which no
subset of states can simultaneously fire and result in a non-negative
configuration.   A $z$-superstable configuration is one in which no multiset of states can simultaneously fire and result in a non-negative configuration.  

In \cite{lionel}, the term superstable is used for
$\chi$-superstables.  In \cite{Perkinson}, the term superstable is
used for $z$-superstables.  For undirected graphs and Eulerian
directed graphs, the notions coincide, i.e. a configuration is
$\chi$-superstable if and only if it is $z$-superstable.  Moving to
non-Eulerian directed graphs and more generally to M-matrices,  the
$z$-superstable condition is strictly stronger.
It is immediately clear that if $f$ is $z$-superstable then it is $\chi$-superstable.  The following result gives sufficient conditions on a matrix for the converse to hold.
\begin{theorem}\label{thm:same}
Suppose $L$ is a matrix with non-positive off diagonal entries and {\bf non-negative row sums}\footnote{In the graphical case, $L$ is the transpose of the reduced graph Laplacian. Hence this result applies to graphs whose Laplacians have non-negative column sums.}. Then, if $f \ge 0$ is $\chi$-superstable it is  $z$-superstable. 
\end{theorem}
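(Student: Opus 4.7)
The plan is to prove the contrapositive: assuming $f$ is not $z$-superstable, I will manufacture a non-zero $\chi \in \{0,1\}^n$ with $f - L\chi \ge 0$, which contradicts $\chi$-superstability. So I fix a witness $z \in \bZ^n$ with $z \ge 0$, $z \ne 0$, and $f - Lz \ge 0$, and among all such witnesses I choose one minimizing $\|z\|_1 = \sum_i z_i$. The entire proof reduces to showing that this minimal $z$ must lie in $\{0,1\}^n$, since then $z$ itself plays the role of $\chi$.

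To force $z \in \{0,1\}^n$, I let $S = \{i : z_i \ge 1\}$ be the support of $z$, write $\chi_S \in \{0,1\}^n$ for its indicator vector, and consider the candidate $z' = z - \chi_S$. Then $z' \ge 0$ automatically, since $z_i \ge 1$ on $S$ and $z_i = (\chi_S)_i = 0$ off $S$, and $\|z'\|_1 = \|z\|_1 - |S|$. If I can verify $f - Lz' \ge 0$, then minimality of $z$ forces $z' = 0$, i.e.\ $z = \chi_S$, as required.

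Checking $f - Lz' \ge 0$ would split into two cases. For $i \notin S$, the vector $z'$ is supported on $S$ and each entry $L_{ij}$ with $j \in S$ is off-diagonal and therefore non-positive, so $(Lz')_i \le 0$ and $(f - Lz')_i \ge f_i \ge 0$. For $i \in S$, I would use the identity
\begin{equation*}
(f - Lz')_i = (f - Lz)_i + (L\chi_S)_i,
\end{equation*}
which reduces matters to the partial row-sum inequality $(L\chi_S)_i = \sum_{j \in S} L_{ij} \ge 0$. This is where both hypotheses enter simultaneously: the splitting
\begin{equation*}
\sum_{j \in S} L_{ij} = \sum_{j=1}^n L_{ij} - \sum_{j \notin S} L_{ij}
\end{equation*}
has a non-negative first term by the row-sum hypothesis on $L$, and a non-positive second term because each $L_{ij}$ with $i \in S$, $j \notin S$ is off-diagonal.

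The main obstacle I anticipate is precisely locating this reduction step. Naive alternatives such as subtracting a single standard basis vector $e_k$ from $z$, or subtracting $\mathbf{1}$, do not work because the non-positive off-diagonals can push other coordinates below zero. The indicator of the support is the right choice, and the proof that it is ``safe'' is exactly where the partial row-sum inequality above—which uses \emph{both} structural assumptions on $L$—becomes indispensable.
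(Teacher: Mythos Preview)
Your proof is correct. It is close in spirit to the paper's argument---both rest on the same partial row-sum inequality $\sum_{j\in S}L_{ij}\ge 0$ for $i\in S$, obtained by combining non-negative row sums with non-positive off-diagonal entries---but the packaging differs. The paper works directly: given any nonzero $z\ge 0$, it takes $\chi$ to be the indicator of the set where $z$ attains its maximum $\kappa$, invokes $\chi$-superstability to locate an index $j$ with $\chi_j=1$ and $f_j-(L\chi)_j<0$, and then uses the row-sum inequality (applied to $\tilde z=z-\chi$, which still attains its maximum at $j$) to conclude $(L\tilde z)_j\ge 0$, hence $f_j-(Lz)_j<0$. You instead go by contrapositive and minimality, using the indicator of the \emph{support} of $z$ and showing that a $\|\cdot\|_1$-minimal violator must already be $\{0,1\}$-valued. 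Your route is arguably cleaner and self-contained (the minimality could equally be replaced by iterating the peel $z\mapsto z-\chi_S$ a total of $\kappa$ times); the paper's route is a one-shot argument that avoids any extremal choice of $z$ but requires tracking where $\tilde z$ is maximal.
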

\begin{proof}
Let $z \in \bZ^n$ with $z \ge 0$ and $z \neq 0$. We will show that $f-Lz$ must have a negative entry. To this end, let $\kappa=\max_{i} z_i$ and so $\kappa >0$. Define $\chi \in \{0,1\}^n$ such that $\chi_i=1$ if $z_i >0$ and $\chi_i=0$ if $z_i \le 0$.  Let $\tilde{z}=z-\chi$ and let $\tilde{\kappa}= \max_{i} \tilde{z}_i$.   Then  $\tilde{z}_j=\tilde{\kappa}$ for every $j$ such that $\chi_j=1$. Using that $L$ has non-positive off diagonal entries we have that $(L \chi)_i <0$ for every $i$ such that $\chi_i=0$. Therefore, $f_i-(L \chi)_i \ge f_i \ge 0$ for $i$  such that $\chi=0$. Since $f$ is $\chi$-superstable this means that $f_j -(L \chi)_j<0$ for some $j$ where $\chi_j=1$. Consider such a $j$ then we argued that  $\tilde{z}_j=\tilde{\kappa}$ and so
\begin{equation*}
 (L \tilde{z})_j =L_{j j} \tilde{z_j}+\sum_{i \neq j} L_{ji} \tilde{z}_i \ge  \tilde{\kappa} \sum_{i} L_{ji} \ge 0,
\end{equation*}
since we are assuming the row sums are non-negative.

Hence,
\begin{equation*}
f_j-(Lz)_j=f_j-(L \chi)_j-(L \tilde{z})_j< f_j-(L \chi)_j<0.
\qedhere
\end{equation*}

\end{proof}

The next example shows that if the non-negative row sum condition is dropped, then $\chi$-superstable configurations may not be $z$-superstable and not unique per equivalence class.

\begin{example}
Consider the following M-matrix which does not have positive row or column sums:
\[
L = \begin{pmatrix}
3 & -4 \\
-1 & 2 
\end{pmatrix}.
\]
An explicit calculation shows the image of the three non-zero characteristic vectors: 
\[ L \begin{pmatrix} 1 \\ 0 \end{pmatrix} = \begin{pmatrix} 3 \\ -1 \end{pmatrix}, \, \,  L \begin{pmatrix} 0 \\ 1 \end{pmatrix} = \begin{pmatrix} -4 \\ 2 \end{pmatrix}, \, \,  L \begin{pmatrix} 1 \\ 1 \end{pmatrix} = \begin{pmatrix} -1 \\ 1 \end{pmatrix}.
\]

%\[ L \left(\begin{array}{c} 1 \\ 0 \end{array} \right) = \left(\begin{array}{c} 3 \\ -1 \end{array} \right), \, \,
% L \left(\begin{array}{c} 0 \\ 1 \end{array} \right) = \left(\begin{array}{c} -4 \\ 2 \end{array} \right), \, \,
% L \left(\begin{array}{c} 1 \\ 1 \end{array} \right) = \left(\begin{array}{c} -1 \\ 1 \end{array} \right).\] 
Hence, the $\chi$-superstable configurations are: $\begin{pmatrix} 2 \\ 0 \end{pmatrix}$, $ \begin{pmatrix} 1 \\ 0 \end{pmatrix}$, and $\begin{pmatrix} 0 \\ 0 \end{pmatrix}$.
%$\left(\begin{array}{c} 2 \\ 0 \end{array} \right), \left(\begin{array}{c} 1 \\ 0 \end{array} \right)$ and $\left(\begin{array}{c} 0\\ 0 \end{array} \right)$.
Of these three configurations, $ \begin{pmatrix} 2 \\ 0 \end{pmatrix}$ is not $z$-superstable since it is in the image of $L$, $L \begin{pmatrix} 2 \\ 1 \end{pmatrix} = \begin{pmatrix} 2 \\ 0 \end{pmatrix}$.  Note this shows that $\begin{pmatrix} 2 \\ 0 \end{pmatrix}$ and $\begin{pmatrix} 0 \\ 0 \end{pmatrix}$ are equivalent under $L$ and so the $\chi$-superstable configurations are not unique per equivalence class.

\end{example}

A $z$-superstable configuration is one in which no subset of states
can fire \emph{with multiplicity} and result in a non-negative
configuration.  Again we note that $z$-superstable configurations are the same as the well-known
$\chi$-superstable configurations for undirected graphs and Eulerian directed
graphs \cite{lionel, PS}.  At the level of M-matrices (which include graph Laplacians from non-Eulerian directed graphs with global sink), $z$-superstability is the the natural notion to consider.

\subsection{z-superstables and energy minimizers}\label{z-energy}

The next results show that for an arbitrary M-matrix, $z$-superstable configurations coincide with energy minimizers (compare to Theorem 4.14 of \cite{BS}).

\begin{theorem}\label{thm1}
Let $L$ be an M-matrix. A vector $f \in \bZ^n$ with $f \ge0$ is $z$-superstable if and only if it is the minimizer of
\begin{equation*}
\min_{g\sim f, g \ge 0}  E(g).
\end{equation*}
\end{theorem}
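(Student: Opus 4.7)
My plan is to prove both implications by exploiting the two lemmas and the non-negativity of $L^{-1}$, which is the defining feature of M-matrices used throughout this section.

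For the easier direction, suppose $f$ is the energy minimizer in $[f]$, and assume toward a contradiction that $f$ is not $z$-superstable. Then there exists $z \ge 0$, $z \ne 0$, with $g = f - Lz \ge 0$. I would apply the second form of Lemma~\ref{lemma2} to write
\begin{equation*}
E(g) = E(f) - z^t z - 2 z^t L^{-1} g.
\end{equation*}
Since $L$ is an M-matrix, $L^{-1} \ge 0$, so $L^{-1} g \ge 0$; combined with $z \ge 0$ this gives $z^t L^{-1} g \ge 0$, while $z \ne 0$ forces $z^t z > 0$. Thus $E(g) < E(f)$, contradicting minimality. Hence every energy minimizer is $z$-superstable.

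For the other direction, suppose $f \ge 0$ is $z$-superstable. Let $w$ be the unique energy minimizer in $[f]$ (which exists by Theorem~\ref{uniquem}) and write $w = f - Lz$ for some $z \in \bZ^n$. Since $f, w \ge 0$, Lemma~\ref{lemma1} gives $f - L z^+ \ge 0$. Now $z^+ \ge 0$, so the $z$-superstability of $f$ forces $z^+ = 0$, i.e.\ $z \le 0$. Applying the first form of Lemma~\ref{lemma2},
\begin{equation*}
E(w) = E(f) + z^t z - 2 z^t L^{-1} f.
\end{equation*}
Since $L^{-1} f \ge 0$ and $z \le 0$, the term $-2 z^t L^{-1} f \ge 0$, and $z^t z \ge 0$; therefore $E(w) \ge E(f)$. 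On the other hand $w$ is the minimizer, so $E(w) \le E(f)$. Hence equality holds throughout, which forces $z^t z = 0$ and consequently $z = 0$, giving $w = f$. Thus $f$ is the energy minimizer.

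I do not expect a serious obstacle: the two lemmas were designed precisely to carry out this kind of argument, and the essential input on both sides is the non-negativity of $L^{-1}$ together with the sign information on $z$ or $z^+$. The only care needed is to pick the correct form of Lemma~\ref{lemma2} on each side so that the sign of $-2 z^t L^{-1}(\cdot)$ cooperates with the sign of $z$.
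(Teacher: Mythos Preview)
Your proof is correct and follows essentially the same approach as the paper: both directions use Lemma~\ref{lemma1} to force $z^+=0$ from $z$-superstability and then the appropriate form of Lemma~\ref{lemma2} together with $L^{-1}\ge 0$ to control the sign of the energy difference. The only minor difference is that in the forward direction the paper argues directly that $E(g)\ge E(f)$ for every competitor $g$, whereas you take a small detour by invoking Theorem~\ref{uniquem} to pick the minimizer $w$ and then show $w=f$; your computation already gives the paper's direct inequality, so the appeal to Theorem~\ref{uniquem} is not needed.
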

\begin{proof}
First,  suppose that $f$ is $z$-superstable and let $g \sim  f$ with $g \ge 0$. Then we know that there exists $z \in \bZ^n$ such that $g=f-Lz$. By Lemma \ref{lemma1} $h=f-Lz^+ \ge 0$, but since $f$ is $z$-superstable then it must be that
$z^+=0$, or in other words $z \le 0$. Since by Lemma \ref{lemma2}
\begin{equation*}
E(g)=E(f)+z^tz-2z^t L^{-1} f
\end{equation*}
we have that $E(g) \ge E(f)$.

On the other hand suppose that $f$ is the minimizer. Assume for the moment $f$ is not $z$-superstable. Then this implies there exists $z \in \bZ^n$ with $z \ge 0$ and $z$ not identically zero such that $g=f-Lz \ge 0$. Since
\begin{equation*}
E(g)=E(f)-z^tz-2z^t L^{-1}g,
\end{equation*}
this implies that 
\begin{equation*}
E(g)\le E(f)-z^tz <E(f).
\end{equation*}
However, this contradicts that $f$ is the minimizer. Hence, it must be that $f$ is $z$-superstable.
\end{proof}

 We have now shown that energy-minimizers are unique up to equivalence class (see Theorem \ref{uniquem}) and that they coincide with $z$-superstable configurations.  Of course, this implies that the $z$-superstable configurations are unique up to equivalence class.

\begin{corollary}\label{uniquez}
Let $L$ be an M-matrix.  For every equivalence class defined by
$L$, there exists a unique $z$-superstable configuration. 
\end{corollary}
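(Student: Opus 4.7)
The plan is to derive Corollary \ref{uniquez} as a direct combination of Theorem \ref{uniquem} (unique energy minimizer per equivalence class) with Theorem \ref{thm1} (energy minimizers coincide with $z$-superstable configurations). The only substantive thing to check is that the hypotheses of Theorem \ref{uniquem} apply to every equivalence class, i.e.\ that each equivalence class contains a non-negative representative so that the constrained minimization problem \eqref{min} is nonempty.

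First I would settle this preliminary point. By condition (4) in the definition of an M-matrix, there exists $x \in \mathbb{R}^n$ with $x \ge 0$ and $Lx$ strictly positive componentwise. Since both conditions $x \ge 0$ and $Lx > 0$ are preserved under small rational perturbation, and then under scaling by a common denominator, I may choose $x \in \mathbb{Z}^n_{\ge 0}$ with $Lx$ a strictly positive integer vector. Given any $f \in \mathbb{Z}^n$, taking $k \in \mathbb{Z}_{>0}$ large enough yields $f + kLx \ge 0$, and $f + kLx = f - L(-kx) \sim f$. Hence every equivalence class contains a non-negative representative.

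For existence, I would then pick a non-negative representative $f$ in the given equivalence class. By Theorem \ref{uniquem}, problem \eqref{min} has a unique solution $g \sim f$ with $g \ge 0$. By the forward direction of Theorem \ref{thm1}, $g$ is $z$-superstable. So the equivalence class contains a $z$-superstable configuration. For uniqueness, suppose $g_1, g_2$ are both $z$-superstable and lie in the same equivalence class (both are automatically non-negative by the definition of $z$-superstable). By the reverse direction of Theorem \ref{thm1}, each is a minimizer of \eqref{min} for $f = g_1$ (equivalently $f = g_2$, since they generate the same class), and Theorem \ref{uniquem} gives $g_1 = g_2$.

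There is no real obstacle: once the non-negative representative is produced, the corollary is essentially a one-line consequence of the two preceding theorems. The only step requiring any care is the rational/integer scaling argument that turns the real witness $x$ from condition (4) into an integer witness, and this is standard since strict positivity of $Lx$ is an open condition.
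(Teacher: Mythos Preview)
Your proof is correct and follows the same route as the paper, which simply records that the corollary is immediate from Theorems \ref{uniquem} and \ref{thm1}. You add the worthwhile detail---glossed over in the paper---that every equivalence class contains a non-negative representative so that problem \eqref{min} is nonempty; your perturbation argument works, though a slightly cleaner route is to use condition (3) and take $x$ to be an integer multiple of $L^{-1}\mathbf{1}$ (compare Lemma \ref{big}).
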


\noindent In the special case of non-negative row sums, another immediate corollary of the Theorems \ref{thm:same} and \ref{thm1} follows.
\begin{corollary}\label{cor1}
Let $L$ be an M-matrix with non-negative row sums. For every equivalence class, there exists a unique energy-minimizer, a unique $z$-superstable configuration, a unique $\chi$-superstable configuration all of which coincide.
\end{corollary}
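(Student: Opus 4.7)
The proof is essentially a bookkeeping exercise that chains together three results already in hand, so my plan is simply to identify the right implications and compose them. The overall strategy is to start from the existence/uniqueness of energy minimizers, pull the conclusion over to $z$-superstables via Theorem \ref{thm1}, and then extend it to $\chi$-superstables via Theorem \ref{thm:same}.

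First, I would invoke Theorem \ref{uniquem}: since $L$ is an M-matrix, every equivalence class $[f]$ contains a unique solution to $\min_{g \sim f,\, g \ge 0} E(g)$. Next, Theorem \ref{thm1} identifies this unique energy minimizer as precisely the unique $z$-superstable configuration in $[f]$, so we immediately obtain uniqueness of $z$-superstable configurations per equivalence class and the fact that they coincide with energy minimizers (this is Corollary \ref{uniquez}). So far, the hypothesis of non-negative row sums has not been used.

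The remaining task is to bring $\chi$-superstability into the picture. Here I would use the two directions separately. One direction is free: as noted in the text right before Theorem \ref{thm:same}, any $z$-superstable configuration is automatically $\chi$-superstable, since $\{0,1\}^n \subset \bZ^n_{\ge 0}$. For the converse, the assumption that $L$ has non-negative row sums is exactly what Theorem \ref{thm:same} requires (note that $L$ being an M-matrix forces $L_{ij} \le 0$ for $i \ne j$), so every $\chi$-superstable configuration in this setting is also $z$-superstable. Combining both directions, the classes of $\chi$-superstable and $z$-superstable configurations coincide under the row sum hypothesis.

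Putting these pieces together: in each equivalence class there is a unique $z$-superstable configuration (which equals the unique energy minimizer by Theorem \ref{thm1}), and the $\chi$-superstables agree with the $z$-superstables, so this same configuration is also the unique $\chi$-superstable configuration in the class. I do not anticipate any real obstacle here; the only thing to be careful about is confirming that the standing assumptions of Theorem \ref{thm:same} (non-positive off-diagonals and non-negative row sums) are all implied by the hypotheses of the corollary, which is immediate since M-matrices are Z-matrices by definition.
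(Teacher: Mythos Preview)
Your proposal is correct and matches the paper's approach exactly: the paper states this corollary as an immediate consequence of Theorems \ref{thm:same} and \ref{thm1} (together with Corollary \ref{uniquez}, which itself rests on Theorem \ref{uniquem}), and you have simply unpacked that chain of implications. There is nothing to add.
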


Before making the connection between $z$-superstable configurations
and critical configurations, we note a few properties about
$z$-superstable configurations.  The proposition below shows that if
$f$ is a $z$-superstable configuration and any entry of $f$ is reduced
but remains non-negative, then the result is also a $z$-superstable
configuration.  In the (undirected) graphical case, $\chi$-superstable
configurations satisfy a stronger condition that for all
\emph{maximal} superstable configurations, the sum of the coordinates
is the same. The example below shows that this does not extend to
$z$-superstable configurations of arbitrary M-matrices.

\begin{proposition}
Suppose $f$ is a $z$-superstable configuration with respect to $L$ and $g \leq f$, i.e. $g$ is coordinate-wise less than or equal to $f$. Then $g$ is $z$-superstable.  Namely, $z$-superstable configurations are component-wise downward closed.  
\end{proposition}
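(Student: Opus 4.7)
The plan is to argue by contradiction, using the same witness $z$ for both configurations. Suppose $g$ is not $z$-superstable. By definition, this produces some $z \in \bZ^n$ with $z \ge 0$ and $z \neq 0$ such that $g - Lz \ge 0$ component-wise.

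I would then use the hypothesis $g \le f$ to produce the same failure of $z$-superstability for $f$. The key observation is the decomposition
\begin{equation*}
f - Lz = (f - g) + (g - Lz),
\end{equation*}
where $f - g \ge 0$ by assumption and $g - Lz \ge 0$ by the choice of $z$. Since both summands are non-negative, so is $f - Lz$. Thus the same $z$ witnesses that $f$ is not $z$-superstable, contradicting the hypothesis on $f$.

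The argument requires no properties of $L$ beyond what is already stated, and in particular does not use the M-matrix structure, Lemmas~\ref{lemma1}–\ref{lemma2}, or non-negativity of $L^{-1}$. There is no genuine obstacle: the proof is a one-line contrapositive, and its only subtlety is recognizing that the appropriate move is to re-use the witness $z$ rather than to try to modify it based on the difference $f - g$. The downward-closedness statement is really just a direct consequence of the definition.
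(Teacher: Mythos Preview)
Your proof is correct and essentially identical to the paper's: both argue by contradiction, take a witness $z$ for the failure of $z$-superstability of $g$, and observe that $f - Lz \ge 0$ (the paper phrases this as $(Lz)_i \le g_i \le f_i$, which is the same as your decomposition $f - Lz = (f-g) + (g - Lz)$). Your remark that no M-matrix structure is needed is also accurate.
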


\begin{proof}
Suppose $f$ and $g$ are as above, and $g$ is not $z$-superstable.  Then there exist $z$ such that $g - Lz \geq 0$.  Since $(Lz)_i \leq g_i \leq f_i$ for all $i$ we see that $f - Lz \geq 0$, contradicting the fact that $f$ is $z$-superstable.   
\end{proof}

\noindent On the other hand, $z$-superstable configurations do not form a \emph{pure}
order ideal of $\mathbb{N}^n$. Consider the following M-matrix,
\[
L = \begin{pmatrix}
5 & -2 \\
-4 & 3 
\end{pmatrix}.
\]
It is easily checked that $L$ has seven $z$-superstable configurations.  The two maximal configurations  under the component-wise partial order are
$\begin{pmatrix} 2 \\ 1 \end{pmatrix}$ and $\begin{pmatrix} 0 \\ 2 \end{pmatrix}$.
Hence the two maximal configurations do not have equal sums, a situation that can not occur in the graphical case, see e.g.~\cite{Merino, Tetali}.

\subsection{$\boldsymbol{z}$-superstable and critical configurations}\label{z-critical}
In this section we prove the duality pairing between $z$-superstable configurations and critical configurations.

Given a matrix $L \in \bZ^{n \times n}$ with positive diagonal entries, define $D^L \in \bZ^n$  by $D^L_i = L_{ii}-1$ for all $i$.  Namely, $D^L$ is the vector formed by taking the diagonal entries of $L$ and subtracting $1$ from each.
Recall that for a   
given  matrix $L$, a configuration $f \in \bZ^n$  is said to be stable if $f \leq D^L$. A configuration $ f \in \bZ^n$ is said to be unstable if it is not stable.

\begin{definition}
A configuration $\bc \in \bZ^n$ is a critical configuration if it is stable and if there exists a configuration $g \in \bZ^n$ with $g_i \ge L_{ii}$ for all $i$ with 
\begin{equation*}
c=g- \sum_{j=1}^k L e_{i_j},
\end{equation*}
and the requirement $$ \, g-\sum_{j=1}^\ell L e_{i_j} \ge L_{i_{\ell+1}, i_{\ell+1}}$$
 for all $\ell < k$.

\end{definition}

\noindent Interpreting the notation above, the definition states that
a configuration ${ c}$ is critical if there exists a configuration ${
  g}$ whose entries are at least as large as the diagonal of $L$ and
such that ${ g}$ can legally fire a single vertex at a time and result
in the configuration ${ c}$.  Critical configurations are also 
referred to as recurrent configurations. In the case of chip-firing on
graphs, critical configurations are often defined using the idea of
firing the sink vertex.   The definition given here is more appropriate for our
setting of M-matrices where the model does not have a site designated
as the sink.

Gabrielov established the existence of critical configurations for any avalanche-finite system.

\begin{proposition}[\cite{gabe}]
For any M-matrix, critical configurations exist and are unique per equivalence class.
\end{proposition}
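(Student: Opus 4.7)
The proposition has two halves, existence and uniqueness of a critical configuration in each equivalence class.

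\emph{Existence.} Given $c\in\bZ^n$, the plan is to pass to an equivalent configuration dominating $\mathrm{diag}(L)$ and then stabilize. Condition (4) of the M-matrix definition supplies $x\in\RRR^n_{\ge 0}$ with $Lx>0$ coordinatewise; since this is an open condition on $x$, one can take $x$ rational, and clearing denominators yields an integer $y\ge 0$ with $Ly>0$. For $N\in\bZ_{>0}$ sufficiently large, $\tilde c:=c+L(Ny)\in[c]$ satisfies $\tilde c_i\ge L_{ii}$ for every $i$. Avalanche-finiteness of $L$ guarantees that any maximal legal firing sequence from $\tilde c$ terminates at a stable configuration $c^\ast\in[c]$, and by construction $c^\ast$ is critical with launching configuration $\tilde c$.

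\emph{Uniqueness.} The plan rests on two aspects of the abelian property of chip-firing for M-matrices. The first is \emph{confluence}: for any $h\ge 0$ whose legal firing sequences all terminate, the stable result $\mathrm{stab}(h)$ and the total firing vector are independent of the order of firings chosen. I would prove this by the standard diamond argument---if $i$ and $j$ are each legal moves at $h$, then both orders are legal and produce the same $h-L(e_i+e_j)$, using crucially that firing $i$ strictly increases every coordinate $j\ne i$ because the off-diagonal entries of $L$ are non-positive---combined with Newman's lemma, with termination furnished by avalanche-finiteness. The second ingredient is \emph{reachability}: if $h,h'\in\bZ_{\ge 0}^n$ with $h-h'=Lz$ and $z\in\bZ_{\ge 0}^n$, then there is a legal firing sequence from $h$ with firing vector $z$ reaching $h'$.

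Now suppose $c_1,c_2\in[c]$ are both critical. Using the existence construction, pick $h\in[c]$ with $h\ge c_1$ and $h\ge c_2$ coordinatewise (by adding a sufficiently large multiple of $L(Ny)$ to a given representative). Because $L^{-1}\ge 0$ and $h-c_k\in L\bZ^n$, the integer vectors $z_k:=L^{-1}(h-c_k)$ are non-negative and satisfy $h-Lz_k=c_k$, which is stable. Reachability then provides legal firing sequences from $h$ with firing vectors $z_k$ reaching $c_k$; confluence forces $c_1=\mathrm{stab}(h)=c_2$. The main obstacle is establishing reachability rigorously in the M-matrix setting: one must show that any non-negative integer vector $z$ with $h-Lz\ge 0$ admits a legal realization, which generalizes the classical graph chip-firing lemma of Bj\"orner-Lov\'asz-Shor but requires a careful induction using the M-matrix structure of $L$ (in particular, using $L^{-1}\ge 0$ to ensure that whenever there is ``unfinished business'' in $z$ some coordinate still has enough chips to fire).
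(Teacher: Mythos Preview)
The paper does not supply its own proof of this proposition; it is quoted from Gabrielov. (The paper's later results do give an independent proof---combine Corollary~\ref{uniquez} with Theorem~\ref{thm:duality} and the self-contained proof of Theorem~\ref{converse}---but by a route entirely different from yours, via energy minimization and $z$-superstability rather than confluence.)

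Your existence argument is correct. The uniqueness argument has a genuine gap: the reachability lemma you state is \emph{false} for general M-matrices. Take the paper's own example
\[
L=\begin{pmatrix}3&-4\\-1&2\end{pmatrix},\qquad h=\begin{pmatrix}2\\1\end{pmatrix},\quad h'=\begin{pmatrix}0\\1\end{pmatrix},\quad z=\begin{pmatrix}2\\1\end{pmatrix}.
\]
Then $Lz=(2,0)^T=h-h'$, so $h,h'\ge 0$ and $z\in\bZ^2_{\ge 0}$; yet $h$ is already stable ($2<3$, $1<2$), so no legal firing is possible from $h$. Your parenthetical---that $L^{-1}\ge 0$ ensures ``some coordinate still has enough chips to fire'' whenever there is unfinished business in $z$---is exactly what fails once $L$ has a negative row sum. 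This is the same obstruction that separates $\chi$- from $z$-superstability in Section~\ref{stable}, and it does not disappear under the Bj\"orner--Lov\'asz--Shor style induction you propose.

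In your intended application the target $c_k$ is critical and $h$ is chosen large, so this particular stable $h$ cannot be the starting point. But your inductive proof of reachability never invokes criticality of the target, and nothing prevents the legal-firing process from reaching an intermediate stable configuration before the firing vector $z_k$ is exhausted; at that point the induction stalls. Ruling this out requires an argument of the type in Lemma~\ref{fundamental} and the proof of Theorem~\ref{converse}, which uses the launching configuration $g$ with $g_i\ge L_{ii}$ in an essential way---that is, it uses criticality of $c_k$, not merely stability. As written, the uniqueness half is incomplete.
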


 In order to prove the
connection between $z$-superstable configurations and critical
configurations for M-matrices we need two simple lemmas.

\begin{lemma}\label{big}
Suppose that $L \in \bZ^{n \times n}$ is an M-matrix. Given any vector $y \in \mathbb{Z}^{n}$ with  $y \geq 0$ there exists a vector $z \in \bZ^n$ with $z \ge 0$ such that $Lz \ge y$.
\end{lemma}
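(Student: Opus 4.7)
The plan is to combine two structural facts about the M-matrix $L$ guaranteed by the equivalent conditions in the definition: condition (3) gives $L^{-1}\geq 0$, and condition (4) gives a vector $x\in\mathbb{R}^n$ with $x\geq 0$ and $Lx>0$ componentwise (equivalently, one may take $x:=L^{-1}\mathbf{1}$, which is automatically strictly positive since otherwise a row of $L^{-1}$ would vanish, contradicting invertibility). The naive candidate is $w:=L^{-1}y$, which is rational and nonnegative with $Lw=y$, but $w$ need not be integral, and simply replacing $w$ by its componentwise ceiling fails because the off-diagonal entries of $L$ are negative, so rounding can decrease coordinates of $Lw$. The fix is to first push $w$ strictly above $y$ by adding a large multiple of $x$, then round up, with the slack chosen large enough to absorb the rounding error.

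Concretely, I would proceed as follows. Choose $x\geq 0$ with $\alpha_i:=(Lx)_i>0$ for each $i$, and set
\begin{equation*}
\lambda \;:=\; \max_{1\leq i\leq n}\,\frac{1}{\alpha_i}\sum_{j\neq i}|L_{ij}|,
\end{equation*}
then define the integer vector $z$ by $z_i:=\lceil (w+\lambda x)_i\rceil$. Since $w,\lambda x\geq 0$, clearly $z\geq 0$. Writing $\delta:=z-(w+\lambda x)\in[0,1)^n$, we have
\begin{equation*}
Lz \;=\; Lw+\lambda Lx+L\delta \;=\; y+\lambda Lx+L\delta.
\end{equation*}
Using that $L_{ii}>0$, $L_{ij}\leq 0$ for $i\neq j$, and $0\leq\delta_j<1$, one bounds
\begin{equation*}
(L\delta)_i \;\geq\; -\sum_{j\neq i}|L_{ij}|\delta_j \;\geq\; -\sum_{j\neq i}|L_{ij}|,
\end{equation*}
so coordinate-wise $(Lz)_i\geq y_i+\lambda\alpha_i-\sum_{j\neq i}|L_{ij}|\geq y_i$ by the choice of $\lambda$.

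There is no serious obstacle here; the only thing one must be careful about is the sign of the rounding error when passing through $L$, which is exactly why the buffer $\lambda x$ with $Lx$ strictly positive is inserted before rounding. This argument only uses that $L$ is a $Z$-matrix with $L^{-1}\geq 0$ (equivalently with a strictly positive vector in its image), so it applies verbatim to every M-matrix.
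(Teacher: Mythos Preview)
Your argument is correct. The buffer-and-round construction works exactly as you describe: the rounding error $\delta\in[0,1)^n$ can cost at most $\sum_{j\neq i}|L_{ij}|$ in coordinate $i$ of $Lz$, and the choice of $\lambda$ guarantees that $\lambda\alpha_i$ covers this deficit.

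The paper's proof, however, takes a much shorter route that exploits the integrality hypothesis $L\in\mathbb{Z}^{n\times n}$. Since $L$ has integer entries, $L^{-1}$ has rational entries, so $g:=L^{-1}y\in\mathbb{Q}_{\geq 0}^n$. Writing $g_i=a_i/b_i$ with $a_i,b_i\in\mathbb{N}$ and setting $\lambda:=b_1b_2\cdots b_n$, the vector $z:=\lambda g$ is a nonnegative integer vector, and $Lz=\lambda y\geq y$ because $\lambda\geq 1$ and $y\geq 0$. That is the entire proof.

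The contrast is instructive. Your approach never uses that $L^{-1}y$ is rational; it would produce an integer $z$ with $Lz\geq y$ for any real M-matrix $L$, at the price of invoking condition~(4) and controlling a rounding error. The paper's scaling trick is essentially one line but leans on the rationality of $L^{-1}$, which is automatic here since the lemma assumes $L$ is an integer matrix. For the purposes of this paper the scaling argument is the more economical choice, but your version would be the natural one if the integrality hypothesis were dropped.
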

\begin{proof}
Let $g \in \mathbb{Q}^n$ be given by $g=L^{-1} y$. Since $L$ is an M-matrix and $y$ is non-negative, $g \ge 0$. Let $g_i=\frac{a_i}{b_i}$ where $a_i, b_i \in \mathbb{N}$.  Let $\lambda =b_1 b_2 \cdots b_n$ then $z=\lambda g \in \bZ^n$ with $z \ge 0$ and $Lz =\lambda y \ge y$.   
\end{proof}

\begin{lemma}\label{iter}
Suppose that $L$ is a $Z$-matrix. If $c=g-\sum_{j=1}^k L e_{i_j}$ with $c$ stable and $g$ not stable then for every $\ell$ with $g_{\ell} \ge L_{\ell \ell}$  there exists $1\le j \le k$ so that $i_j=\ell$.  
\end{lemma}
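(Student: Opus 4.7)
The plan is to argue by contradiction, exploiting the sign pattern of a Z-matrix. Suppose that there is some index $\ell$ with $g_\ell \ge L_{\ell\ell}$ but $i_j \ne \ell$ for every $j=1,\dots,k$. I will track what the firing sequence does to the $\ell$-th coordinate and show that $c_\ell$ cannot drop below $L_{\ell\ell}$, contradicting stability of $c$.

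Concretely, I would write
\[
c_\ell \;=\; g_\ell \;-\; \sum_{j=1}^k (L e_{i_j})_\ell \;=\; g_\ell \;-\; \sum_{j=1}^k L_{\ell,i_j}.
\]
Under the assumption that no $i_j$ equals $\ell$, every index $L_{\ell,i_j}$ appearing in the sum is an off-diagonal entry of $L$. Since $L$ is a Z-matrix, each such entry satisfies $L_{\ell,i_j}\le 0$, so $-L_{\ell,i_j}\ge 0$. Hence
\[
c_\ell \;\ge\; g_\ell \;\ge\; L_{\ell\ell}.
\]
This contradicts the assumption that $c$ is stable (which requires $c_\ell < L_{\ell\ell}$), so some $i_j$ must equal $\ell$.

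I do not expect any real obstacle here; the content of the lemma is essentially the observation that firing a state other than $\ell$ can never decrease the $\ell$-th coordinate when $L$ is a Z-matrix, so the only way to bring a coordinate that is initially at least $L_{\ell\ell}$ down below $L_{\ell\ell}$ is to fire that coordinate itself. The only mild subtlety is that the lemma makes no legality assumption on the intermediate firings; the argument above never uses legality, only the telescoping identity $c = g - \sum_j L e_{i_j}$ and the sign pattern of $L$, so this is fine.
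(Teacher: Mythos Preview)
Your proof is correct and essentially identical to the paper's: both argue by contradiction, observe that if no $i_j$ equals $\ell$ then $\sum_{j=1}^k (L e_{i_j})_\ell \le 0$ by the Z-matrix property, and conclude $c_\ell \ge g_\ell \ge L_{\ell\ell}$, contradicting stability of $c$. Your additional remark that legality of the intermediate firings is not needed is accurate and worth noting.
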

\begin{proof}
Suppose that $g_{\ell} \ge L_{\ell \ell}$ and suppose that $i_j \neq \ell$ for every $j=1, \ldots, k$. Then since $L$ is a $Z$ matrix we have
\begin{equation*}
\sum_{j=1}^{k} (L e_{i_j})_\ell \le 0,
\end{equation*}
and so
\begin{equation*}
c_{\ell} \ge g_{\ell} \ge L_{\ell \ell},
\end{equation*}
contradicting the fact that $c$ is stable.
\end{proof}

\begin{theorem}
\label{thm:duality}
Let $L$ be an M-matrix. If $f \in \bZ^n$ is $z$-superstable then $D^L-f$ is a critical configuration.  
\end{theorem}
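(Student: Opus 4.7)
\medskip
\noindent\textbf{Proof plan.} The plan is to verify the two defining properties of a critical configuration for $c := D^L - f$: that $c$ is stable, and that there is a starting configuration $g$ with $g_i \ge L_{ii}$ for all $i$ together with a legal one-site-at-a-time firing sequence from $g$ reaching $c$.

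\medskip
\noindent\textbf{Step 1: Stability of $D^L-f$.} Since $f \ge 0$, we have $(D^L-f)_i = L_{ii}-1-f_i \le L_{ii}-1 < L_{ii}$, so $c = D^L-f$ is stable.

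\medskip
\noindent\textbf{Step 2: Construction of $g$.} I would apply Lemma~\ref{big} to the non-negative vector $y = f + \mathbf{1}$ (where $\mathbf{1}$ is the all-ones vector) to produce $z \in \bZ^n$ with $z \ge 0$ and $Lz \ge f + \mathbf{1}$. Set $g := (D^L - f) + Lz$. Then $g \sim c$, and componentwise
\begin{equation*}
g_i = L_{ii} - 1 - f_i + (Lz)_i \ge L_{ii} - 1 - f_i + (f_i + 1) = L_{ii},
\end{equation*}
so every entry of $g$ is at least its diagonal threshold.

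\medskip
\noindent\textbf{Step 3: Building the legal firing sequence.} I would construct the sequence $i_1,\ldots,i_k$ (with $k = \sum_i z_i$) greedily. After $\ell$ firings the configuration is
\begin{equation*}
g^{(\ell)} := g - \sum_{j=1}^\ell L e_{i_j} = (D^L - f) + L z^{(\ell)}, \qquad z^{(\ell)} := z - \sum_{j=1}^\ell e_{i_j},
\end{equation*}
and I maintain the invariant $z^{(\ell)} \ge 0$. If $z^{(\ell)} = 0$ then $g^{(\ell)} = D^L - f = c$ and we are done. Otherwise I need to exhibit some index $i$ with $z^{(\ell)}_i > 0$ and $g^{(\ell)}_i \ge L_{ii}$, i.e.\ $(L z^{(\ell)})_i \ge f_i + 1$, and take $i_{\ell+1} := i$.

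\medskip
\noindent\textbf{Step 4 (the main obstacle): why such an index exists.} This is where $z$-superstability of $f$ enters. Suppose for contradiction that no such $i$ exists, i.e.\ for every $i$ with $z^{(\ell)}_i > 0$ we have $(L z^{(\ell)})_i \le f_i$. For indices $i$ with $z^{(\ell)}_i = 0$, the off-diagonal entries of $L$ are $\le 0$ and $z^{(\ell)} \ge 0$ give $(L z^{(\ell)})_i = \sum_{j\neq i} L_{ij} z^{(\ell)}_j \le 0 \le f_i$. Combining the two cases yields $f - L z^{(\ell)} \ge 0$ with $z^{(\ell)} \ge 0$ and $z^{(\ell)} \ne 0$, contradicting that $f$ is $z$-superstable. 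This contradiction shows the greedy step is always available, so the firing sequence can be carried to completion, certifying that $c = D^L - f$ is critical.
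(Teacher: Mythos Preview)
Your proof is correct and follows essentially the same approach as the paper: show $D^L-f$ is stable, use Lemma~\ref{big} to produce a large enough starting configuration $g$, and then greedily order the firings using $z$-superstability of $f$ to guarantee a legal site is always available. The only difference is organizational: where the paper first invokes $z$-superstability to find some fireable index $q$ and then appeals to Lemma~\ref{iter} to argue $q$ lies in the support of the remaining firing vector, your Step~4 folds both into a single contrapositive argument (using the $Z$-matrix sign pattern directly on indices outside the support), which is a mild streamlining but not a different route.
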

\begin{proof}
Let $f$ be $z$-superstable. It is  not difficult to show that $D^L-f$ is stable. By  Lemma \ref{big} there exists a vector $z \ge 0$ such that $(D^L-f+Lz)_i \ge L_{ii}$ for all $i$. Set $g= D^L-f+Lz$ so that $g_{i} \ge L_{ii}$ for all $i$. Note that since $z \ge 0$ we can write $z=\sum_{j=1}^k e_{i_j}$. 
We know that 
\begin{equation*}
D^L-f=g-\sum_{j=1}^k L e_{i_j}.
\end{equation*}
The proof will be complete if we can show there exists a permutation $\sigma$ of $\{1, \ldots, k\}$ so that
\begin{equation*}
g^{\ell} := g-\sum_{j=1}^\ell L e_{i_{\sigma(j)}},
\end{equation*}
is such that 
\begin{equation}\label{eq1}
g^{\ell}_{i_{\sigma(\ell+1)}} \ge L_{i_{\sigma(\ell+1)} i_{\sigma(\ell+1)}},
\end{equation}
for $\ell=1, \ldots, k-1$.

We proceed to define the permutation $\sigma$ inductively. Suppose that we have chosen $\sigma(1), \sigma(2), \dots, \sigma(r-1)$ with $r \le k$ so that $\eqref{eq1}$ holds for $1 \le \ell \le r-2$.  We know that
\begin{equation*}
g^{r-1}= D^L-f+L \tilde{z}
\end{equation*}
or equivalently
\begin{equation*}
f-L\tilde{z} = D^L-g^{r-1},
\end{equation*}
where $$\tilde{z}=\sum_{j=1}^k Le_{i_j}-\sum_{j=1}^{r-1} L e_{i_{\sigma(j)}} \ge 0$$ and $\tilde{z} \neq 0$. Since $f$ is $z$-superstable we know that there exists a $q$ such that $(D^L-g^{r-1})_q <0$ or equivalently $g^{r-1}_q \ge L_{q q}$. Also, since $c=g^{r-1}-(\sum_{j=1}^k Le_{i_j}-\sum_{j=1}^{r-1} L e_{i_{\sigma(j)}})$,  by Lemma \ref{iter}  there exists $1 \le \sigma(r) \le k$ such that $\sigma(r) \neq \sigma(j)$ for $j=1,2, \ldots, r-1$ such that $i_{\sigma(r)}=q$. This completes the proof.
\end{proof}

Gabrielov \cite{gabe} showed that critical configurations are unique up to equivalence class for M-matrices. We have shown that $z$-superstable configurations are unique up to equivalence class  and their duals are critical configurations,  this is enough to show the following converse of Theorem \ref{thm:duality}. 

\begin{theorem}\label{converse}
Let $L \in \bZ^{n\times n}$ be an M-matrix. If $c$ is a critical configuration then $D^L-c$ is $z$-superstable. 
\end{theorem}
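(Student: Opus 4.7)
The plan is to deduce the converse from Theorem~\ref{thm:duality} together with the two uniqueness results already in hand: uniqueness of $z$-superstable configurations per equivalence class (Corollary~\ref{uniquez}) and Gabrielov's uniqueness of critical configurations per equivalence class. No new combinatorial construction is needed; the argument is purely a bijection/uniqueness argument.

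Given a critical configuration $c$, I would first consider the equivalence class $[D^L - c]$. By Corollary~\ref{uniquez}, there exists a unique $z$-superstable configuration $f$ with $f \sim D^L - c$. Applying Theorem~\ref{thm:duality} to this $f$ yields that $D^L - f$ is a critical configuration. The next step is to observe that $D^L - f$ and $c$ lie in the same equivalence class: from $f - (D^L - c) = Lz$ for some $z \in \bZ^n$ we immediately get $(D^L - f) - c = -Lz$, so $D^L - f \sim c$.

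Now both $c$ and $D^L - f$ are critical configurations in the same equivalence class. By Gabrielov's uniqueness result for critical configurations (cited above as a proposition), we conclude $c = D^L - f$, i.e.\ $f = D^L - c$. Since $f$ was $z$-superstable, so is $D^L - c$, which is exactly what needed to be shown.

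The main thing to check carefully is that the equivalence relation is symmetric under the map $x \mapsto D^L - x$, which is immediate because $D^L$ is a fixed vector and equivalence is defined by membership in the $\bZ$-image of $L$. There is no real obstacle: the theorem is essentially a counting/pigeonhole consequence of the fact that $x \mapsto D^L - x$ sends $z$-superstable configurations injectively into critical configurations (by Theorem~\ref{thm:duality}), and both sets have exactly one element per equivalence class, so the map is a bijection and every critical configuration is in its image.
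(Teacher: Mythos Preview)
Your proposal is correct. In fact, the paper explicitly acknowledges this exact route just before stating Theorem~\ref{converse}: ``We have shown that $z$-superstable configurations are unique up to equivalence class and their duals are critical configurations, this is enough to show the following converse of Theorem~\ref{thm:duality}.'' Your write-up simply spells out the details of that sentence, and the check that $x \mapsto D^L - x$ preserves equivalence classes is immediate as you note.

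The written proof in the paper, however, is a different ``alternative proof'' given for completeness. It argues directly: assuming $f = D^L - c$ is not $z$-superstable, one finds $z \ge 0$, $z \neq 0$ with $f - Lz \ge 0$; using the firing sequence witnessing criticality of $c$ and the M-matrix property, one derives that $g - L(w-z)$ is stable with $w - z \ge 0$, and then Lemma~\ref{fundamental} forces $w - z \ge w$, i.e.\ $z \le 0$, a contradiction. The advantage of the paper's direct argument is that it avoids invoking Gabrielov's uniqueness of critical configurations as a black box, making the duality self-contained within the paper's own lemmas. Your argument is shorter and conceptually cleaner, but leans on that external uniqueness result.
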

 
For completeness we will give an alternative proof of  Theorem \ref{converse}. To do so, we need the following known lemma which appears for example in \cite{gabe}. We also give a proof of this lemma for completeness. 
% **Say what this lemma is doing in words***
\begin{lemma}\label{fundamental}
Let $L\in \bZ^{n\times n}$ be a Z-matrix with positive diagonal entries.  Let $c$ be a stable configuration and $c=g-\sum_{j=1}^k Le_{i_j}$ with $g^\ell_{i_{\ell+1}} \ge L_{i_{\ell+1} i_{\ell+1}}$ for $\ell=1, \ldots, k-1$ where $g^\ell=g-\sum_{j=1}^\ell L e_{i_j}$. If $g-Lw$ is stable where $w \ge 0$, then  $w \ge z$ where $z= \sum_{j=1}^k e_{i_j}$.
\end{lemma}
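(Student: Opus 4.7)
My plan is to prove Lemma \ref{fundamental} by induction on $k$, the length of the legal firing sequence from $g$ to the stable configuration $c$. This is essentially a "least action principle" for legal firing sequences, and the key structural fact I will exploit is the Z-matrix property: firing a vertex $j$ can only increase the chip count at any other vertex $\ell \ne j$ (because $-L_{\ell j} \ge 0$).

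The base case $k = 0$ is immediate since $z = 0$. For the inductive step, the crucial first move is to show $w_{i_1} \ge 1$. The idea is that $g_{i_1} \ge L_{i_1 i_1}$ by the hypothesis $g^0_{i_1} = g_{i_1} \ge L_{i_1 i_1}$ (here I am using that the firing sequence condition implicitly starts at $\ell = 0$, or equivalently $g$ is not stable at $i_1$ since $i_1$ fires next). Suppose for contradiction that $w_{i_1} = 0$. Then writing
\begin{equation*}
(g - Lw)_{i_1} = g_{i_1} - L_{i_1 i_1} w_{i_1} - \sum_{j \ne i_1} L_{i_1 j} w_j = g_{i_1} + \sum_{j \ne i_1} (-L_{i_1 j}) w_j \ge g_{i_1} \ge L_{i_1 i_1},
\end{equation*}
since $-L_{i_1 j} \ge 0$ and $w_j \ge 0$ for all $j$. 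This contradicts the stability of $g - Lw$, so $w_{i_1} \ge 1$.

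Having established $w_{i_1} \ge 1$, set $g^1 = g - L e_{i_1}$ and $w' = w - e_{i_1} \ge 0$. Then
\begin{equation*}
c = g^1 - \sum_{j=2}^{k} L e_{i_j}, \qquad g - Lw = g^1 - Lw',
\end{equation*}
and the legal firing hypothesis passes to $g^1$: the condition $g^\ell_{i_{\ell+1}} \ge L_{i_{\ell+1} i_{\ell+1}}$ for $\ell = 1, \ldots, k-1$ is exactly the legality condition for the sequence $i_2, \ldots, i_k$ starting from $g^1$. Since $g - Lw = g^1 - Lw'$ is stable and $w' \ge 0$, the inductive hypothesis applied to the length-$(k-1)$ sequence yields $w' \ge \sum_{j=2}^{k} e_{i_j} = z - e_{i_1}$, which rearranges to $w \ge z$.

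I do not anticipate a serious obstacle here; the only technical point requiring care is checking that $g_{i_1} \ge L_{i_1 i_1}$ genuinely follows from the statement (so that the "never fire $i_1$" scenario is ruled out on the very first step), and then confirming that the shifted data $(g^1, w', i_2, \ldots, i_k)$ honestly satisfies all hypotheses of the lemma with $k$ replaced by $k - 1$. Both are bookkeeping, with the Z-matrix inequality $-L_{\ell j} \ge 0$ for $\ell \ne j$ doing all the real work.
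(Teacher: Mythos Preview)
Your proof is correct and is essentially the same argument as the paper's, just organized inductively rather than as a single contradiction: the paper assumes $w \not\ge z$, locates the first index $\ell$ at which $\tilde w = w - \sum_{j<\ell} e_{i_j}$ has $\tilde w_{i_\ell}=0$, and then applies Lemma~\ref{iter} to $g^{\ell-1} - L\tilde w = g - Lw$ to force $\tilde w_{i_\ell}>0$, which is exactly your computation $(g-Lw)_{i_1} \ge g_{i_1} \ge L_{i_1 i_1}$ carried out at step $\ell$ instead of step $1$. Your concern about needing $g^0_{i_1}=g_{i_1}\ge L_{i_1 i_1}$ is well placed: the lemma as stated only lists $\ell=1,\dots,k-1$, but the paper's own proof also uses the $\ell=0$ case (``by our hypothesis $g^{\ell-1}_{i_\ell}\ge L_{i_\ell i_\ell}$'' with $\ell=1$), so the intended hypothesis is $\ell=0,\dots,k-1$, and in the sole application (Theorem~\ref{converse}) one has $g_i\ge L_{ii}$ for all $i$ anyway.
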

\begin{proof}
Suppose that $w$  is not greater than $z$. Then, this implies there exists $1\le \ell \le k$ such that $w=\sum_{j=1}^{\ell-1} e_{i_j}+\tilde{w}$ with $\tilde{w} \ge 0$ and $\tilde{w}_{i_\ell}=0$. However, by our hypothesis $g^{\ell-1}_{i_{\ell}} \ge L_{i_{\ell} i_{\ell}}$. Moreover, $g-Lw=g^{\ell-1}-L \tilde{w}$, so by Lemma \ref{iter} it must be $\tilde{w}_{i_{\ell}} >0$. Hence, we reached a contradiction.  
\end{proof}

Now we turn to the proof of Theorem \ref{converse}.
\begin{proof}[Proof of Theorem \ref{converse}]
Let $c$ be a critical configuration and let $f=D^L-c$. Suppose that $f$ is not $z$-superstable.  Then, this implies there exists a vector $z \ge 0$ with $z \neq 0$ such that $f-Lz \ge 0$. Since $c$ is critical, there exists a vector $g >D^L$ such that 
\begin{equation*}
c=g- \sum_{j=1}^k L e_{i_j},
\end{equation*}
with  $g^\ell_{i_{\ell+1}} \ge L_{i_{\ell+1} i_{\ell+1}}$ for $\ell=1, \ldots, k-1$, where $g^\ell=g-\sum_{j=1}^\ell L e_{i_j}$. Setting $w=\sum_{j=1}^k e_{i_j}$,  we see that
\begin{equation*}
f-Lz=D^L-g +L(w-z).
\end{equation*}
Which gives  
\begin{equation*}
L(w-z) \ge g-D^L > 0.
\end{equation*}
Since $L$ is an M-matrix this implies that  $w-z \ge 0$. Also, we have 
\begin{equation*}
D^L \ge g-L(w-z), 
\end{equation*}
which implies $g-L(w-z)$ is stable. However, by Lemma \ref{fundamental} $w-z \ge w$. Which implies that $z \le 0$. Hence, we have reached a contradiction. 
\end{proof}

\subsection{Graph Laplacians and $G$-parking functions}

As mentioned before, the fact that $\chi$-superstable configurations
are unique (up to equivalence class) and are in simple duality with
critical configurations is well known for graph Laplacians of
undirected graphs.  
%It also follows easily from our results since the reduced graph Laplacian of any undirected graph is easily seen to be an M-matrix.

In the directed graph case, this result was first extended to Eulerian
graphs; see for example \cite{lionel, PS}.  The Eulerian condition
ensures the reduced Laplacian has non-negative column sums.  In this
case, $\chi$-superstable configurations continue to coincide with
$z$-superstable configurations.

The duality was further extended to all directed graphs with a global
sink.  The result first appears in \cite{Perlman} and later in
\cite{Perkinson} and \cite{Asadi}.  In this case, $\chi$-superstable
configurations are not the same as $z$-superstable configurations.
Critical configurations are in duality with the $z$-superstable
configurations, which are called superstable configurations in
\cite{Perkinson, Perlman} and reduced divisors in \cite{Asadi}.
If $\Delta$ is the reduced Laplacian resulting from any directed graph
with a global sink then $\Delta$ is an M-matrix, this was shown
explicitly for example in \cite{PS}.  Hence we also recover this result as
a special case of our duality pairing for any system defined by an
M-matrix.

%results show that the duality pairing extends to all directed graphs with global sink.
%%   In the directed graph case, previous results were limited to Eulerian directed graphs; see for example \cite{lionel, PS}.  The Eulerian condition ensures the reduced Laplacian has non-negative column sums.  But, this is not necessary to be an M-matrix.  If 
%% $\Delta$ is the reduced Laplacian resulting from any directed graph
%% with a global sink then $\Delta$ is an M-matrix, this was shown explicitly
%% for example in \cite{PS}.
%% Hence a special case of our results show that the duality pairing extends to all directed graphs with global sink.  The important observation is that the duality is between critical configurations and $z$-superstable configurations,  not $\chi$-superstable configurations.  
%% We end with an example of a non-Eulerian directed graph with a global
%% sink.  

As a final remark, in an attempt to clarify the literature, we relate
these notions to $G$-parking functions.  For a directed graph $G$, a
parking function is a non-negative integer sequence $(a_1, a_2, \ldots, a_n)$ such that for every subset $I \subseteq [n]$ there exists $i \in I$ such that 
$$ a_i < d_I(i),$$
where $d_I(i)$ is the number of edges from $i$ to vertices not in $I$.  
In the undirected (and directed Eulerian) graph case, $G$-parking functions,
$\chi$-superstable configurations, $z$-superstable configurations, and 
reduced divisors all coincide.

However, in the non-Eulerian directed graph case, 

\centerline{$\chi$-superstables $\neq$ $z$-superstables $=$ reduced divisors $\neq$ $G$-parking functions.}  This distinction is implicit in \cite{PS}.  We end with an explicit example illustrating the difference.

%% $\chi$-superstables do not form a system of representatives for
%% equivalence up to the image of the Laplacian.  $z$-superstables (reduced
%% divisors) and parking functions both form systems of representatives
%% but they are distinct.

%% We illustrate with the following examples.  

\begin{example}Consider the graph on $3$ vertices with directed graph
Laplacian equal to:
$$ \bordermatrix{ & 1 & 2 & \textrm{s} \cr 1 & 3 & -3 & 0 \cr 2 & -1
  & 2 & -1 \cr \textrm{s} & 0 & 0 & 0}. $$ Vertex $1$ has three edges
directed to vertex $2$.  Vertex $2$ has a single edge to vertex $1$
and a single edge to the sink.  The sink has no outgoing edges. 
The transpose of the reduced graph Laplacian is:
$$ L = \begin{pmatrix}
3 & -1\\
-3 & 2
\end{pmatrix}.
$$ It is not hard to check that all four $0/1$-vectors of length two
are $\chi$-superstable for this graph.  On the other hand, the all
ones vector is not $z$-superstable as it is equal to $L \cdot
(1,2)^T$.  In particular, the all ones configuration is equivalent to
the all zeros configuration.  For this graph, $D^L = (2,1)$ and hence
the critical configurations are $(2,1)$, $(1,1)$, and $(2,0)$, the $z$-superstables are $(0,0)$, $(1,0)$, $(0,1)$.  It is
easily checked that the $G$-parking functions are $(0,0)$, $(1,0)$,
$(2,0)$.

\end{example}

{\bf Acknowledgements} The authors thank David Perkinson for many helpful discussions and an anonymous reviewer for a thoughtful question concerning energy forms.  

\section{Appendix}

Here we show that z-superstable configurations are also the  minimizers of a more general class of energies. We consider any energy of the form
\begin{equation}\label{generalenergyappnedix}
E(q)=\sum_{i=1}^n \phi_i((L^{-1} q)_i),
\end{equation}
where the functions $\phi_i: \mathbb{R} \rightarrow \mathbb{R}$ are non-negative and strictly increasing.

We first prove that there is a unique $E(q)$-minimizer per equivalence class. The minimization problem remains the same.

Given  $f \in \mathbb{Z}^n$ with  $f \ge 0$ consider the following problem:
\begin{equation}\label{minp}
\min_{g\sim f, g \ge 0}  E(g).
\end{equation}

The following result is the generalization of Theorem \ref{uniquem}. In fact, the wording of the statement is exactly the same except that we now consider the energy \eqref{generalenergyappnedix}. Also, the reader will notice that the beginning of the proof is identical to that of Theorem \ref{uniquem}.
% with some of the sentences being identical. 

\begin{theorem}
Let $L$ be  an M-matrix. For every configuration $f$, there exists a unique energy minimizer equivalent to $f$. Namely, for every configuration $f$, there exists a unique solution to problem \eqref{minp}. 
\end{theorem}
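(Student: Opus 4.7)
The plan is to mirror the structure of the proof of Theorem \ref{uniquem}, replacing the explicit quadratic expansion in Lemma \ref{lemma2} with a componentwise monotonicity argument. As in that proof, I would begin by taking two non-negative minimizers $g, w \sim f$, writing $g = w - L z$ for some $z \in \bZ^n$, and setting $h = w - L z^+$. Lemma \ref{lemma1} still gives $h \ge 0$ with $h \sim w$, so $h$ is admissible.

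The key observation is that $L^{-1} h = L^{-1} w - z^+$. Because $L$ is an M-matrix we have $L^{-1} \ge 0$, so both $L^{-1} w$ and $L^{-1} h$ are non-negative vectors. Componentwise,
$$
0 \;\le\; (L^{-1} h)_i \;=\; (L^{-1} w)_i - z^+_i \;\le\; (L^{-1} w)_i,
$$
with strict inequality precisely at those coordinates where $z^+_i > 0$. Applying the strict monotonicity of $\phi_i$ on the non-negative reals (which is the only place the argument ever evaluates it), this yields $\phi_i((L^{-1} h)_i) \le \phi_i((L^{-1} w)_i)$, with equality if and only if $z^+_i = 0$. Summing over $i$ gives $E(h) \le E(w)$, with strict inequality whenever $z^+ \ne 0$. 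Since $w$ is a minimizer this forces $z^+ = 0$, i.e., $z \le 0$.

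To close, I would apply the same componentwise monotonicity in the reverse direction. With $z \le 0$ we have $L^{-1} g = L^{-1} w + (-z) \ge L^{-1} w \ge 0$, so monotonicity again gives $\phi_i((L^{-1} g)_i) \ge \phi_i((L^{-1} w)_i)$, strictly whenever $z_i < 0$. Hence $E(g) \ge E(w)$ with strict inequality if $z \ne 0$, and the assumption that $g$ and $w$ are both minimizers forces $z = 0$ and therefore $g = w$.

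The conceptual point is that the quadratic expansion of Lemma \ref{lemma2} can be bypassed entirely: the M-matrix hypothesis places $L^{-1} h$ and $L^{-1} w$ in the non-negative orthant and aligns them componentwise, reducing everything to the assumed strict monotonicity of each $\phi_i$ along a one-dimensional non-negative segment. I do not anticipate a genuine obstacle; the only subtle point is interpreting ``strictly increasing,'' since motivating examples such as $\phi_i(x) = |x|^p$ are not monotone on all of $\mathbb{R}$ but are strictly increasing on $[0,\infty)$, which is precisely the half-line on which the argument evaluates them.
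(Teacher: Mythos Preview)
Your proposal is correct and follows essentially the same route as the paper's own proof: both set $h = w - Lz^+$, use $L^{-1}h = L^{-1}w - z^+$ together with $L^{-1} \ge 0$ to get componentwise comparisons in the non-negative orthant, apply strict monotonicity of each $\phi_i$ to force $z^+ = 0$, and then repeat the argument with $L^{-1}g = L^{-1}w - z$ to force $z = 0$. Your remark on interpreting ``strictly increasing'' is apt; the paper handles it by writing $\phi_i(|H_i|)$, which amounts to the same restriction to the non-negative half-line you identify.
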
 

\begin{proof}
Suppose that $g \sim f$ and $w\sim f$ with $ g,w \ge 0$ are both minimizers to problem \eqref{min}. We will show that $g=w$.
Because $g$ is equivalent to $w$,  there exists $z$ such that $g=w-Lz$ for some $z \in \bZ^n$. By Lemma \ref{lemma1} we know that $h=w-L z^+ \ge 0$ and of course $h \sim w \sim f$. We have that $H=W-z^+$ where $H=L^{-1}h$ and $W=L^{-1} w$. Since $L$ is an M-matrix the entries of $L^{-1}$ are non-negative and so $H, W >0$.  Note that $0 \le H_i \le W_i$ and since $\phi_i$ is strictly increasing we have $\phi_i(|H_i|) \le  \phi_i( |W_i| )$ for all $1 \le i \le n$ with strict inequality whenever $z_i^+>0$. 

Therefore,  
\begin{equation*}
E(h)=\sum_{i=1}^n \phi_i(|H_i|) \le \sum_{i=1}^n \phi_i(|W_i|)=E(w),
\end{equation*}
with strict inequality if at least one $z_i^+$ is greater than zero.  Since $w$ is a minimizer it must be that $z^+$ is identically zero. In other words, $z \le 0$. If we let $G=L^{-1}g$ then we have $G=W-z$ and therefore we have that $G_i \ge W_i \ge 0$ with strict inequality whenever $z_i<0$. Hence, 

\begin{equation*}
E(g)=\sum_{i=1}^n \phi_i(|G_i|) \ge \sum_{i=1}^n \phi_i(|W_i|)=E(w),
\end{equation*}
with strict inequality if  at least one $z_i$ is less than zero. Since $g$ is a minimizer it must be that $z$ is identically zero. That is, $g=w$.  
\end{proof}

The following result is the generalization to Theorem \ref{thm1}.

\begin{theorem}
Let $L$ be an M-matrix. A vector $f \in \bZ^n$ with $f \ge0$ is $z$-superstable if and only if it is the minimizer of
\begin{equation*}
\min_{g\sim f, g \ge 0}  E(g).
\end{equation*}
\end{theorem}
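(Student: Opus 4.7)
The plan is to mirror the proof of Theorem \ref{thm1}, replacing the algebraic identity of Lemma \ref{lemma2} — which relied on the $L^2$-structure and is unavailable for arbitrary $\phi_i$ — with the componentwise monotonicity argument just used in the uniqueness theorem of this appendix. The hinge in both directions is that the M-matrix property $L^{-1} \ge 0$ transfers inequalities between chip configurations and their preimages under $L^{-1}$, where the strict monotonicity of each $\phi_i$ can then be applied.

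For the forward direction, I would assume $f$ is $z$-superstable and take any $g \sim f$ with $g \ge 0$, writing $g = f - Lz$ for some $z \in \bZ^n$. Lemma \ref{lemma1} supplies $h := f - Lz^+ \ge 0$, and then $z$-superstability of $f$ forces $z^+ = 0$, i.e.\ $z \le 0$. Setting $F := L^{-1}f$ and $G := L^{-1}g$, I get $G = F - z \ge F \ge 0$, using $L^{-1} \ge 0$ together with $f \ge 0$. Strict monotonicity of each $\phi_i$ then gives $\phi_i(F_i) \le \phi_i(G_i)$ coordinatewise and $E(f) \le E(g)$ upon summation, showing $f$ is a minimizer.

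For the reverse direction, I would argue by contradiction. If $f$ minimizes but is not $z$-superstable, I pick $z \in \bZ^n$ with $z \ge 0$ and $z \ne 0$ such that $g := f - Lz \ge 0$. Writing $G = L^{-1}g = F - z$, I obtain $0 \le G_i \le F_i$ for each $i$, with strict inequality at every coordinate where $z_i > 0$ — and at least one such coordinate exists. Strict monotonicity of $\phi_i$ at that coordinate yields $E(g) < E(f)$, contradicting minimality. The main step worth flagging is the bookkeeping needed to guarantee that $F$ and $G$ lie in the non-negative orthant so that strict monotonicity of $\phi_i$ can be invoked coordinatewise; this is precisely where the M-matrix hypothesis $L^{-1} \ge 0$ enters. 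Once that sign information is secured, the two implications proceed in near-exact parallel to the proof of Theorem \ref{thm1}, with no further structural ingredient required.
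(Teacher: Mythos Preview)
Your proof is correct and follows essentially the same route as the paper's appendix proof: in both directions you use Lemma~\ref{lemma1} together with $z$-superstability to force the sign of $z$, then pass to $F = L^{-1}f$ and $G = L^{-1}g$, exploit $L^{-1} \ge 0$ to secure non-negativity, and apply the strict monotonicity of the $\phi_i$ coordinatewise. The structure, the lemmas invoked, and the contradiction in the reverse direction all match the paper's argument.
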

\begin{proof}
First,  suppose that $f$ is $z$-superstable and let $g \sim  f$ with $g \ge 0$. Then we know that there exists $z \in \bZ^n$ such that $g=f-Lz$. By Lemma \ref{lemma1} $h=f-Lz^+ \ge 0$, but since $f$ is $z$-superstable then it must be that $z^+=0$, or in other words $z \le 0$. We have $G=F-z$ where $G=L^{-1} g$ and $F= L^{-1} f$ and so  $G_i \ge F_i \ge 0$. Therefore, we see that $\phi_i(|G_i|) \ge \phi_i(|F_i|)$ for every $1 \le i \le n$. Hence, we have $E(g) \ge E(f)$, and so $f$ is a minimizer. 

On the other hand, suppose that $f$ is the minimizer. Assume for the moment $f$ is not $z$-superstable. Then this implies there exists $z \in \bZ^n$ with $z \ge 0$ and $z$ not identically zero such that $g=f-Lz \ge 0$. Since $G=F-z$ it must be that $0 \le G_i \le F_i$ for every $i$ and $G_i < F_i$ for at least one $i$. Hence, $\phi_i(|G_i|) \le \phi_i(|F_i|)$ for every $i$ and $\phi_i(|G_i|) < \phi_i(|F_i|)$ for at least one $i$. Therefore, $E(g)< E(f)$, but this contradicts that $f$ is a minimizer. Hence, it  must be that $f$ is $z$-superstable.
\end{proof}

%% ------------------------------------------------------------------

\end{document}